\newcommand{\longhookrightarrow}{\ensuremath{\lhook\joinrel\relbar\joinrel\rightarrow}}
\theoremstyle{definition}
\newtheorem{defi}{Definition}[section]
\theoremstyle{plain}
\newtheorem{thm}[defi]{Theorem}
\newtheorem{prop}[defi]{Proposition}
\newtheorem{cor}[defi]{Corollary}
\newtheorem{lemma}[defi]{Lemma}
\theoremstyle{remark}
\newtheorem{rmk}[defi]{Remark}
\theoremstyle{definition}
\newcommand{\longra}{\longrightarrow}
\newcommand{\ka}{{\mathcal A}}
\newcommand{\km}{{\mathcal M}}
\newcommand{\kr}{{\mathcal R}}
\newcommand{\kv}{{\mathcal V}}
\newcommand{\kz}{{\mathcal Z}}
\newcommand{\IG}{{\mathbb G}}
\newcommand{\IP}{{\mathbb P}}
\newcommand{\Z}{{\mathbb Z}}
\newcommand{\rH}{{\rm H}}
\newcommand{\codim}{\operatorname{codim}}
\newcommand{\Pic}{\operatorname{Pic}}
\newcommand{\End}{\operatorname{End}}
\newcommand{\Sym}{\operatorname{Sym}}
\newcommand{\xdashrightarrow}[2][]{\ext@arrow 0359\rightarrowfill@@{#1}{#2}}
\newcommand{\xdashleftarrow}[2][]{\ext@arrow 3095\leftarrowfill@@{#1}{#2}}
\newcommand{\xdashleftrightarrow}[2][]{\ext@arrow 3359\leftrightarrowfill@@{#1}{#2}}
\def\rightarrowfill@@{\arrowfill@@\relax\relbar\rightarrow}
\def\leftarrowfill@@{\arrowfill@@\leftarrow\relbar\relax}
\def\leftrightarrowfill@@{\arrowfill@@\leftarrow\relbar\rightarrow}
\def\arrowfill@@#1#2#3#4{%
  $\m@th\thickmuskip0mu\medmuskip\thickmuskip\thinmuskip\thickmuskip
   \relax#4#1
   \xleaders\hbox{$#4#2$}\hfill
   #3$%
}
	\title{On isogenies of Prym varieties}
	\author{Roberto Laface and C\'esar Mart\'inez}
	\email{laface@math.uni-hannover.de}
    \email{cesar.martinez@unicaen.fr}
\begin{document}

\thispagestyle{empty}
\begin{abstract}
We prove an extension of the Babbage-Enriques-Petri theorem for semi-ca\-no\-ni\-cal curves. We apply this to show that the Prym variety of a generic element of a codimension $k$ subvariety of $\kr_g$ is not isogenous to another distinct Prym variety, under some mild assumption on $k$.
\end{abstract}
\maketitle

\setcounter{tocdepth}{1}

\section{Introduction}
Let $\mathcal{R}_g$ denote the moduli space of unramified irreducible double covers of complex smooth curves of genus~$g$.
Given an element $\pi:D\rightarrow C$ in $\mathcal{R}_g$, we can lift this morphism to the corresponding Jacobians via the norm map
\[
\mathrm{Nm}_{\pi}:J(D)\rightarrow J(C).
\]
By taking the neutral connected component of its kernel, we obtain an abelian variety of dimension $g-1$ called the \emph{Prym variety} attached to~$\pi$.

In this note, we study the isogeny locus in $\ka_{g-1}$ of Prym varieties attached to generic elements in $\kr_g$;
that is, principally polarized abelian varieties of dimension $g-1$ which are isogenous to such Prym varieties.
More concretely, given a subvariety $\kz$ of $\kr_g$ of codimension~$k$ and a generic element $\pi:D\rightarrow C$ in $\kz$,
we prove that the Prym variety attached to $\pi$ is not isogenous to a distinct Prym variety, whenever $g \geq \max \lbrace 7, 3k+5 \rbrace$,
see Theorem~\ref{thm1}.

This result is an extension of the analogue statements for Jacobians of generic curves proven by Bardelli and Pirola~\cite{bardelli-pirola89} for the case $k=0$, and Marcucci, Naranjo and Pirola~\cite{marcucci-naranjo-pirola16} for $k>0$, $g \geq 3k+5$ or $k=1$ and $g\geq 5$.
In the latter, to prove the case $g \geq 3k+5$, they use an argument on infinitesimal variation of Hodge structure proposed by Voisin in~\cite[Remark~(4.2.5)]{bardelli-pirola89} which allows them to translate the question to a geometric problem of intersection of quadrics.
In doing so, they give a generalization of Babbage-Enriques-Petri's theorem which allows them to recover a canonical curve from the intersection of a system of quadrics in $\mathbb{P}^{g-1}$ of codimension~$k$.
The strategy we follow to prove Theorem~\ref{thm1} is an adaptation of these techniques to the setting of Prym varieties.
We are also able to give an extension of Babbage-Enriques-Petri's theorem for semicanonical curves
in a similar fashion as in~\cite{marcucci-naranjo-pirola16}, see Proposition~\ref{prop1}.
Our result generalises the one by Lange and Sernesi~\cite{lange-sernesi96} for curves of genus $g\geq 9$,
since it recovers a semicanonical curve of genus $g\geq 7$ from a system of quadrics in $\mathbb{P}^{g-2}$ of codimension~$k$, $g \geq 3k+5$.

\medskip \noindent {\textbf{Acknowledgments.}} We warmly thank the organizing committee of Pragmatic for the generous funding and the opportunity of enjoying such a lively and stimulating environment.
We thank Juan Carlos Naranjo and V\'ictor González Alonso for proposing this problem and for their helpful advice.

\section{Intersection of quadrics}
Let $C$ be a smooth curve. Given a globally generated line bundle $L\in\Pic(C)$,
we denote by $\varphi_L:C\rightarrow \IP \mathrm{H}^0(C,L)^*$ its induced morphism.
If $L$ is very ample, we say that $\varphi_L(C)$ is \emph{projectively normal} if its homogeneous coordinate ring is integrally closed; or equivalently, if for all $k\geq 0$, the homomorphism
\[
\Sym^k \rH^0(L) \longra \rH^0(L^{\otimes k})
\]
is surjective.

We also recall that the \emph{Clifford index} of $C$ is defined as
\[
\min \lbrace \deg(L)-2\mathrm{h}^0(C,L)+2\rbrace,
\]
where the minimum ranges over the line bundles $L\in\Pic(C)$ such that $\mathrm{h}^0(C,L)\geq 2$ and~$\mathrm{h}^0(C,\omega_C\otimes L^{-1})\geq 2$. Its value is an integer between $0$ and~$\lfloor \frac{g-1}{2}\rfloor$, where $g$ is the genus of the curve.


Let $C$ be of genus $g$ and with Clifford index $c$. For any non-trivial $2$-torsion point $\eta$ in the Jacobian of~$C$, we call $\omega_C \otimes \eta$ a \emph{semicanonical line bundle} of $C$ whenever it is globally generated,
and we denote by $\varphi_{\omega_C \otimes \eta}:C\rightarrow\mathbb{P}^{g-2}$ its associated morphism.
In that case, we call its image $C_\eta:=\varphi_{\omega_C \otimes \eta}(C)$ a \emph{semicanonical curve}.
The following is a result of Lange and Sernesi~\cite{lange-sernesi96}, and Lazarsfeld~\cite{lazarsfeld89}:

\begin{lemma}\label{lem:proj-normal}
If $g\geq 7$ and $c\geq 3$, then $\omega_C\otimes\eta$ is very ample and the semicanonical curve $C_\eta$ is projectively normal.
\end{lemma}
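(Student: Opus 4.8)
The plan is to read off both assertions from the normal-generation theory for line bundles on curves, in which the Clifford index is the governing invariant. Write $L:=\omega_C\otimes\eta$. Since $\eta$ has degree $0$ we have $\deg L=2g-2$; since $\eta$ is a nonzero two-torsion point, $\eta^{-1}\cong\eta$ is a nontrivial degree-zero bundle, so $h^0(C,\eta)=0$, whence $h^1(C,L)=h^0(C,\eta)=0$ by Serre duality and $h^0(C,L)=g-1$ by Riemann--Roch --- this is why $\varphi_L$ lands in $\mathbb{P}^{g-2}$. Moreover $|L|$ is base-point free: a base point at $p$ would give $h^0(C,\eta(p))\geq1$, hence $\eta\cong\mathcal{O}_C(q-p)$ and $2q\sim2p$, i.e.\ a $g^1_2$, which is forbidden by $c\geq1$; so the semicanonical curve $C_\eta$ is well defined.

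The core input is the normal-generation criterion of Green and Lazarsfeld, with the vector-bundle treatment in Lazarsfeld's lectures and the Prym-canonical case worked out by Lange--Sernesi: if $M$ is a line bundle on a smooth curve of genus $g$ with $h^1(C,M)=0$ and $\deg M\geq 2g+1-\mathrm{Cliff}(C)$, then $M$ is normally generated, that is, very ample with $\varphi_M(C)$ projectively normal. Applying this with $M=L$, the inequality becomes $2g-2\geq 2g+1-c$, i.e.\ $c\geq3$ --- exactly our hypothesis --- and we conclude that $\omega_C\otimes\eta$ is very ample and $C_\eta\subset\mathbb{P}^{g-2}$ is projectively normal. (Very ampleness alone also has a short direct proof: failure to separate two, possibly infinitely near, points $p,q$ forces $\mathcal{O}_C(2p+2q)\cong\mathcal{O}_C(2r+2s)$ for an effective divisor $r+s\neq p+q$, producing a line bundle of degree $\leq4$ computing a Clifford index $\leq2$, again contradicting $c\geq3$; the hypothesis $g\geq7$ enters to guarantee the relevant $h^1\geq2$.) The genus bound $g\geq7$ is thus not an extra assumption but is forced, since $\mathrm{Cliff}(C)\leq\lfloor(g-1)/2\rfloor$ makes $c\geq3$ vacuous for $g\leq6$.

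The only delicate point, and the main obstacle, is that when $c=3$ we sit exactly on the boundary $\deg L=2g+1-\mathrm{Cliff}(C)$. Unwinding the criterion, projective normality of $\varphi_L(C)$ is equivalent to the vanishing $H^1\bigl(C,M_L\otimes L^{\otimes k}\bigr)=0$ for every $k\geq1$, where $M_L=\ker\bigl(H^0(C,L)\otimes\mathcal{O}_C\twoheadrightarrow L\bigr)$; the cases $k\geq2$ follow from straightforward degree estimates on $M_L$, while the borderline case $k=1$ is where the hypothesis $c\geq3$ must genuinely be used, through a (semi)stability argument on $M_L$ together with a Clifford-type inequality for its destabilizing sub-line-bundles. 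This $k=1$ step is precisely what the cited work of Green--Lazarsfeld, Lazarsfeld and Lange--Sernesi supplies, so I would quote it rather than reprove it; I note only that the exceptional configurations known to occur at this degree bound concern the higher syzygy conditions $N_p$ with $p\geq1$, never projective normality, and so do not affect us.
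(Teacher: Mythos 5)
Your argument is correct and is essentially the paper's own approach: the paper gives no proof but simply attributes the lemma to Lange--Sernesi and Lazarsfeld, whose content is exactly the Clifford-index normal-generation criterion you invoke. Your added verifications ($h^1(\omega_C\otimes\eta)=0$, the degree bound reducing to $c\geq 3$, the direct very-ampleness check, and the observation that $c\geq 3$ already forces $g\geq 7$) are accurate and just make the citation explicit.
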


Furthermore, Lange and Sernesi prove that $C_\eta$ is the only non-degenerate curve in the intersection of all quadrics in $\mathbb{P}^{g-2}$ containing $C_\eta$ if $c>3$, or $c=3$ and $g\geq 9$, see \cite{lange-sernesi96}.
The following proposition generalises this result for a smaller family of quadrics.

\begin{prop}\label{prop1}
Let $C$ be a curve of genus $g$ and Clifford index~$c$, and $\eta$ be a non-trivial $2$-torsion point in~$J(C)$.
Let $I_2(C_\eta) \subset \Sym^2 \rH^0(C,\omega_C \otimes \eta)$ be the vector space of equations of the quadrics containing~$C$,
and $K \subset I_2(C_\eta)$ be a linear subspace of codimension~$k$.
If $g \geq \max \lbrace 7, 2k+6\rbrace$ and $c \geq \max \lbrace 3, k+2 \rbrace$, then $C_\eta$ is the only irreducible non-degenerate curve in the intersection of the quadrics of~$K$.
\end{prop}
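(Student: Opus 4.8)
The plan is to adapt the Babbage–Enriques–Petri strategy used by Marcucci–Naranjo–Pirola in the canonical case. Let $X$ be an irreducible non-degenerate curve contained in the intersection $\bigcap_{Q\in K}Q$; we must show $X=C_\eta$. By Lemma~\ref{lem:proj-normal}, since $g\geq 7$ and $c\geq 3$, the bundle $\omega_C\otimes\eta$ is very ample and $C_\eta$ is projectively normal, so we may freely use the exact sequence relating $I_2(C_\eta)$, $\Sym^2\rH^0(\omega_C\otimes\eta)$ and $\rH^0(\omega_C^{\otimes2})$, and in particular the classical bound $\dim I_2(C_\eta)=\binom{g-1}{2}-(3g-4)=\binom{g-3}{2}-1$ coming from projective normality. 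The first step is purely linear-algebraic: since $K\subset I_2(C_\eta)$ has codimension $k$, we have $\dim K=\dim I_2(C_\eta)-k$, and the genus bound $g\geq 2k+6$ is exactly what is needed to guarantee that $K$ is still large enough to cut out $C_\eta$ set-theoretically in favorable situations — the issue being the possible presence of extra components of the base locus.

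The heart of the argument is a dimension count à la Green. First I would show that $X$ spans $\mathbb{P}^{g-2}$ (non-degeneracy is assumed) and has degree at most $2g-2$ by intersecting with a quadric from $K$ not containing it. Then, following the Enriques–Petri dichotomy, one argues that a non-degenerate curve of such low degree in $\mathbb{P}^{g-2}$ lying on a codimension-$k$ system of quadrics is either projectively normal of the expected degree $2g-2$, hence (by a Castelnuovo-type / Riemann–Roch argument using $c\geq k+2$) is itself a semicanonically embedded curve, or else is a curve of small degree — a rational normal curve, an elliptic normal curve, or a curve on a surface of minimal degree — which is excluded once the Clifford index is large, precisely by the hypothesis $c\geq\max\{3,k+2\}$. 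The Clifford index assumption enters to rule out the exceptional cases of the Enriques–Petri theorem (trigonal curves and plane quintics) and their generalizations: a curve computing a small Clifford index would produce too few quadrics through $C_\eta$, contradicting that $K$, of codimension only $k\leq c-2$, still cuts out the curve.

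Concretely, the key steps in order are: (1) invoke Lemma~\ref{lem:proj-normal} and record the dimension of $I_2(C_\eta)$; (2) given $X$ in the base locus of $K$, bound $\deg X$ and show $X$ is linearly normal by a general-position and Riemann–Roch argument; (3) show the homogeneous ideal of $X$ contains $K$ and deduce, via the codimension count and $c\geq k+2$, that $\rH^0(\mathbb{P}^{g-2},\mathcal{I}_X(2))$ has dimension $\geq\dim I_2(C_\eta)$, forcing $X$ and $C_\eta$ to lie on the same quadrics; (4) apply the Lange–Sernesi / Babbage–Enriques–Petri classification of curves cut out by their quadrics (valid for $c\geq 3$, $g\geq 7$) to conclude $X=C_\eta$, having used the Clifford-index bound to discard the low-Clifford-index exceptions. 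The main obstacle I expect is step (3): controlling how much the quadric system can shrink — i.e.\ proving that dropping to codimension $k$ does not allow a spurious component to appear in the base locus — which is exactly where the interplay between the numerical hypothesis $g\geq 2k+6$ and the Clifford bound $c\geq k+2$ must be used delicately, mirroring the corresponding estimate in \cite{marcucci-naranjo-pirola16} but now with the semicanonical (rather than canonical) Riemann–Roch input $\mathrm{h}^0(\omega_C\otimes\eta)=g-1$.
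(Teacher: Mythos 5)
Your proposal is a plan rather than a proof, and the step you yourself flag as the main obstacle --- step (3), showing that passing from $I_2(C_\eta)$ to a codimension-$k$ subspace $K$ cannot let a spurious irreducible non-degenerate curve appear in the base locus --- is precisely the entire content of the proposition; no argument is offered for it, and the surrounding steps do not hold up. Step (2) is incoherent as stated: a curve $X$ in the base locus of $K$ lies on \emph{every} quadric of $K$, so there is no ``quadric from $K$ not containing it'' with which to intersect, and no degree bound follows. Step (4) cannot serve as the endgame either: even if you knew that $X$ lay on all quadrics through $C_\eta$, the Lange--Sernesi theorem you invoke requires $c>3$, or $c=3$ and $g\geq 9$, so it misses exactly the cases $c=3$, $g=7,8$ that the proposition is meant to cover. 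Finally, the dimension you record is wrong: projective normality gives $\dim I_2(C_\eta)=\binom{g}{2}-(3g-3)$, since $\mathrm{h}^0(\omega_C\otimes\eta)=g-1$, not $\binom{g-1}{2}-(3g-4)$.

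The paper closes the gap by an infinitesimal argument in the style of Bardelli--Pirola and Voisin, not by a Castelnuovo--Green type dichotomy. Assuming a second curve $C_0$ exists in the base locus, one chooses $k+1$ linearly independent points $x_i\notin C_\eta$ there and considers the span $L$ of the rank-one tensors $x_i\otimes x_i$ in $\Sym^2\rH^0(C,\omega_C\otimes\eta)^*$; these annihilate $K$, and since $\dim\bigl(I_2(C_\eta)/K\bigr)=k<k+1$, the dualized projective-normality diagram produces a nonzero $\alpha\in L\cap\rH^1(C,T_C)$. The associated extension $0\to\ko_C\to E_\alpha\to\omega_C\to 0$ admits $g-k'$ sections lifted from $\ker(\cdot\cup\alpha)$, and the hypothesis $g\geq 2k+6$ guarantees that $\wedge^2$ of this space meets the locus of decomposable tensors in the kernel of $\wedge^2 W\to\rH^0(\omega_C)$, giving a sub-line bundle $L_\alpha\subset E_\alpha$ with $h^0\geq 2$. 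The hypothesis $c\geq k+2$ then forces $h^0(\omega_C\otimes L_\alpha^{-1})=1$, and writing $\omega_C\otimes L_\alpha^{-1}\simeq\ko_C(p_1+\cdots+p_e)$ yields $e\leq k'$ together with the span inclusion $\langle x_1,\dots,x_{k'}\rangle\subset\langle p_1,\dots,p_e\rangle$. Varying the $x_i$ on the normalization of $C_0$, the resulting correspondence, via the uniform position theorem and Ran's theorem, is induced by a birational map $C_\eta\dashrightarrow C_0$ which is generically the identity, so $C_0=C_\eta$, a contradiction. This is where the two numerical hypotheses actually do their work; your sketch never engages them in a way that could settle step (3).
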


Notice that for $k=0$, this proposition extends the result of Lange and Sernesi \cite{lange-sernesi96} to the cases when $c=3$ and $g=7$ and~$8$. We refer to Remark~\ref{rmk1} for a brief discussion on a simplified version of the following proof in this case.

\begin{proof}
We start by assuming that there exists an irreducible non-degenerate curve ${C}_0$ in the intersection of quadrics
$\bigcap_{Q\in K}Q\subset \mathbb{P}\mathrm{H}^0(C,\omega_C\otimes \eta)^*$,
which is different from~$C_\eta$.
In particular, we can choose $k+1$ linearly independent points in $\bigcap_{Q\in K}Q$ such that $x_i\not\in C_\eta$ for all~$i$.
By abuse of notation, we denote also as $x_i$ the representatives in $\mathrm{H}^0(C,\omega_C\otimes\eta)^*$.
We define $L\subset \Sym^2\mathrm{H}^0(C,\omega_C\otimes\eta)^*$ as the linear subspace spanned by $x_i\otimes x_i$. 

Let $R=I_2(C_\eta)/K$ and $R'=\Sym^2\mathrm{H}^0(C,\omega_C\otimes\eta)/K$.
By Lemma~\ref{lem:proj-normal} and the fact that $g\geq 7$ and $c\geq 3$, we have that $C_\eta$ is projectively normal.
Hence, we can build the following diagram:
\[
\xymatrix{
 & & 0 \ar[d] & 0 \ar[d] & & \\
 & 0 \ar[r] & K \ar[d] \ar[r] & I_2(C_\eta) \ar[d] \ar[r] & R \ar[r] & 0\\
 & & \Sym^2\mathrm{H}^0(C,\omega_C\otimes\eta) \ar[d] \ar@{=}[r] & \Sym^2\mathrm{H}^0(C,\omega_C\otimes\eta) \ar[d] & & \\
 0 \ar[r] & R \ar[r] & R' \ar[d] \ar[r] & \mathrm{H}^0(C,\omega_C^{\otimes 2}) \ar[d] \ar[r] & 0 & \\
 & & 0 & 0 & &
}
\]
where the last row is obtained by applying the snake lemma to the first two rows. By dualizing this diagram, we get
\[
\xymatrix{
 & & 0 \ar[d] & 0 \ar[d] & & \\
& 0 \ar[r] & \mathrm{H}^0(C,\omega_C^{\otimes 2})^*=\rH^1(C,T_C) \ar[d] \ar[r] & R'^* \ar[d] \ar[r] & R^* \ar[r] & 0\\
 & & \Sym^2\mathrm{H}^0(C,\omega_C\otimes\eta)^* \ar[d] \ar@{=}[r] & \Sym^2\mathrm{H}^0(C,\omega_C\otimes\eta)^* \ar[d] & & \\
 0 \ar[r] & R^* \ar[r] & I_2(C_\eta)^* \ar[d] \ar[r] & K^* \ar[d] \ar[r] & 0 & \\ 
 & & 0 & 0 & &
}
\]

Notice that $Q(\alpha)=0$ for every $\alpha\in L$ and every $Q\in K$.
Therefore, $L\subset R'^*$
Since $\dim(L)=k+1$ and $\dim(R)=k$, there is a non-trivial element $\alpha\in L\cap \mathrm{H}^1(C,T_C)$.
By the isomorphism $\mathrm{H}^1(C,T_C)\simeq\mathrm{Ext}^1(\omega_C,\mathcal{O}_C)$,
there is a $2$ vector bundle $E_\alpha$ associated to $\alpha$ satisfying the following exact sequence:
\begin{equation}\label{eq:1}
0\longrightarrow\mathcal{O}_C\longrightarrow E_\alpha\longrightarrow\omega_C\longrightarrow 0.
\end{equation}
The cup product with $\alpha$ is the coboundary map $\mathrm{H}^0(C,\omega_C)\rightarrow\mathrm{H}^1(C,\mathcal{O}_C)$.
By writing the element $\alpha=\sum_{i=1}^{k+1}a_ix_i\otimes x_i$, we have
\[
\mathrm{Ker}(\cdot\cup\alpha)=\bigcap_{i\;\mid\;a_i\neq 0}H_i,
\]
where $H_i=\mathrm{Ker}(x_i)$.
After reordering, we may assume that $x_1,\ldots,x_{k'}$ are the points such that $a_i\neq 0$, for some $k'\leq k+1$.
This means that there are $g-k'$ linearly independent sections in $\mathrm{H}^0(C,\omega_C)$ lifting to~$\mathrm{H}^0(C,E_\alpha)$.
Denote by $W\subset\mathrm{H}^0(C,E_\alpha)$ the vector space generated by these sections, and consider the morphism
$\psi:\wedge^2W\rightarrow\mathrm{H}^0(C,\omega_C)$ obtained by the following composition:
\[
\wedge^2W\longhookrightarrow \wedge^2 \mathrm{H}^0(C,E_\alpha)
\longrightarrow\mathrm{H}^0(C,\det E_\alpha)=\mathrm{H}^0(C,\omega_C).
\]
The kernel of $\psi$ has codimension at most~$g$, and the Grassmannian of the decomposable elements in $\mathbb{P}(\wedge^2W)$ has dimension $2(g-k'-2)$.
Since $g>2k+5$ by hypothesis, we have that their intersection is not trivial.
Thus, take $s_1,s_2\in \mathrm{H}^0(C,E_\alpha)$ such that $\psi(s_1\wedge s_2)=0$.
They generate a line bundle $M_\alpha\subset E_\alpha$ and $h^0(C,M_\alpha)\geq 2$.
Take $Q_\alpha$ the neutral component of the quotient $E_\alpha/M_\alpha$,
and $L_\alpha$ the kernel of $E_\alpha\rightarrow Q_\alpha$, then we obtain the following exact sequence:
\begin{equation}\label{eq:2}
0\longrightarrow L_\alpha\longrightarrow E_\alpha\longrightarrow Q_\alpha\longrightarrow 0.
\end{equation}
Notice that $M_\alpha\subset L_\alpha$, hence $h^0(C,L_\alpha)\geq 2$.
Moreover from \eqref{eq:1} and \eqref{eq:2}, we obtain $\omega_C\simeq\det E_\alpha\simeq L_\alpha\otimes Q_\alpha$, which implies that $Q_\alpha\simeq \omega_C\otimes L_\alpha^{-1}$. We have the following diagram:
\[
\xymatrix{
 & & 0 \ar[d] & & \\
 & & L_\alpha \ar[d] & & \\
 0 \ar[r] & \mathcal{O}_C \ar[r] \ar[rd] & E_\alpha \ar[d] \ar[r] & \omega_C \ar[r] & 0\\
 & & \omega_C\otimes L_\alpha^{-1} \ar[d] & & \\
 & & 0 & &
}
\]

Assume that $\mathcal{O}_C\rightarrow\omega_C\otimes L_\alpha^{-1}$ is~$0$. Then the section of $E_\alpha$ that represents $\mathcal{O}_C\rightarrow E_\alpha$ would be a section of $L_\alpha$, in particular, a section in~$W$.
Since the sections in $W$ map to sections of $\omega_C$, this contradicts the exactness of the horizontal sequence.
So $\mathcal{O}_C\rightarrow\omega_C\otimes L_\alpha^{-1}$ is not~$0$ and the $h^0(C,\omega_C\otimes L_\alpha^{-1})>0$.

If $h^0(C,\omega_C\otimes L_\alpha ^{-1})\geq 2$, we have that
\begin{equation}\label{eq:3}
c\leq \deg (L_\alpha)-2h^0(C,L_\alpha)+2.
\end{equation}
Moreover, $h^0(C,L_\alpha)+h^0(C,\omega_C\otimes L_\alpha^{-1})\geq h^0(C,E_\alpha)> \dim (W)=g-k'$ and, using Riemann-Roch we obtain that $2h^0(C,L_\alpha)\geq \deg (L_\alpha)+2-k'$.
Combining this with \eqref{eq:3}, we obtain that $c\leq k'\leq k+1$ which contradicts our hypothesis on~$c$ ($c\geq k+2$).
Hence, $h^0(C,\omega_C\otimes L^{-1}_\alpha)=1$.

Write $\omega_C\otimes L^{-1}_\alpha\simeq \mathcal{O}_C(p_1+\cdots+p_e)$, where $e=\deg(\omega_C\otimes L^{-1}_\alpha)$.
Notice that $h^0(C,L_\alpha)\geq g-k'$ and $\deg(L_\alpha)=2g-2-e$.
Using Riemann-Roch, we get
\[
g-k'\leq h^0(C,L_\alpha)=h^0(C,\omega_C\otimes L^{-1}_\alpha)+2g-2-e-(g-1)=g-e.
\]
So $e\leq k'$.

By \eqref{eq:2}, we have that $L_\alpha\simeq \omega_C(-p_1-\cdots -p_e)$.
Moreover, the sections of $L_\alpha$ lie in $W$, and by construction of $W$ we have that $\mathrm{H}^0(\omega_C(-p_1-\cdots-p_e))\subset\mathrm{Ker}(\cdot\cup\alpha)=\cap_{i\mid a_i\neq 0}H_i$.
Therefore, by dualizing this inclusion, we obtain that
\begin{equation}\label{eq:inclusion}
\langle x_1,\ldots,x_{k'}\rangle_{\mathbb{C}}
\subset
\langle p_1,\ldots,p_e\rangle_{\mathbb{C}}.
\end{equation}

Let $\gamma:N_0\rightarrow {C}_0$ be a normalization.
For any generic choice of $k+1$ points $x_i\in N_0$, we can repeat the construction above for $\gamma(x_1),\ldots,\gamma(x_{k+1})$, and we can assume that $k'$ and $e$ are constant.
We define the correspondence
\[
\Gamma=\left\lbrace (x_1+\cdots+x_{k'},p_1+\cdots+p_e)\in N_0^{(k')}\times C_\eta^{(e)}\;\mid\;
\langle\gamma(x_1),\ldots,\gamma(x_{k'})\rangle_{\mathbb{C}}\subset\langle p_1,\ldots,p_e\rangle_{\mathbb{C}}\right\rbrace.
\]
Observe that $\Gamma$ dominates $N_0^{(k')}$, so $e\leq k'\leq \dim\Gamma$.
In addition, the second projection $\Gamma\rightarrow C_\eta^{(e)}$ has finite fibers, since both curves are non-degenerate. This implies that $\dim\Gamma\leq e$, and so we have $k'=e$.
Since $k'\leq k+1\leq g-3$, by the uniform position theorem we have that the rational maps
\begin{align*}
C^{(k')}\dashrightarrow Sec^{(k')}(C_\eta)\subset \IG(e-1,\mathbb{P}^{g-2}),\\
N_0^{(k')}\dashrightarrow Sec^{(k')}(N_0)\subset \IG(e-1,\mathbb{P}^{g-2}),
\end{align*}
are generically injective.
This gives a birational map between $C_\eta^{(k')}$ and $N_0^{(k')}$. In particular, it induces dominant morphisms
$JC_\eta\rightarrow JN_0$ and $JN_0\rightarrow JC_\eta$. Therefore, $g(C_\eta)=g(N_0)$ and by a
theorem of Ran~\cite{ran86}, the birational map $C_\eta^{(k')}\dashrightarrow N_0^{(k')}$ is defined by a birational map between $C_\eta$ and~$N_0$.
By composing it with the normalization map $\gamma$, we obtain a birational map
\[
\varphi: C_\eta\dashrightarrow C_0,
\]
that defines the correspondence $\Gamma$; that is $\langle \varphi(x_1),\ldots,\varphi(x_{k'})\rangle=
\langle x_1,\ldots,x_{k'}\rangle$ for generic elements $x_1+\ldots +x_{k'}\in C_\eta^{(k')}$.
This implies that $\varphi$ is generically the identity map over~$C_\eta$.
Thus $C_\eta=C_0$, which is a contradiction and ends the proof.
\end{proof}

\begin{rmk}\label{rmk1}
The proof of Corollary~\ref{cor1} can be simplified for the case $K=I_2(C_\eta)$, that is~$k=0$.
Under this assumption, we only consider one point $x\not\in C_\eta$, and $k'=e=1$.
Therefore, the inclusion~\eqref{eq:inclusion} already implies the equality $C_\eta=C_0$.
\end{rmk}

\section{Main theorem}
An element in $\mathcal{R}_g$ can be identified with a pair $(C,\eta)$, where $C$ is a complex smooth curve of genus~$g$,
and $\eta$ is a non-trivial $2$-torsion element in the Jacobian of~$C$.
This allows us to consider $\mathcal{R}_g$ as a covering of the moduli space $\mathcal{M}_g$ of complex smooth curves of genus~$g$.
It is given by the morphism
\[
\mathcal{R}_g\longrightarrow\mathcal{M}_g,\quad (C,\eta)\longmapsto C,
\]
which has degree $2^{2g}-1$.
Thus, a generic choice of an element in a subvariety $\mathcal{Z}\subset\mathcal{R}_g$ is equivalent to a generic choice of a curve $C$ in
the image of $\mathcal{Z}$ in $\mathcal{M}_g$, and any non-trivial element~$\eta\in J(C)[2]$.

The following result is a direct consequence
of Proposition~\ref{prop1} and it is the version of Babbage-Enriques-Petri's theorem that we use in the proof of the main result in this article.

\begin{cor}\label{cor1}
Let $(C,\eta)$ be a generic point in a subvariety $\kz$ of $\kr_g$ of codimension~$k$.
Let $I_2(C_\eta) \subset \Sym^2 \rH^0 (C,\omega_C \otimes \eta)$ be the vector space of the equations of quadrics in $\mathbb{P}^{g-2}$ containing~$C_\eta$. Let $K \subset I_2(C_\eta)$ be a linear subspace of codimension~$k$.
If $g \geq \max\lbrace 7,3k+5\rbrace$, then $C_\eta$ is the only irreducible non-degenerate curve in the intersection of the quadrics of~$K$. 
\end{cor}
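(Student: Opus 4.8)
The plan is to deduce this directly from Proposition~\ref{prop1} by checking that, for a generic point $(C,\eta)$ in a codimension-$k$ subvariety $\kz \subset \kr_g$, the Clifford index of $C$ is large enough to apply the proposition. Recall that Proposition~\ref{prop1} requires $g \geq \max\{7, 2k+6\}$ and $c \geq \max\{3, k+2\}$; here we are given only $g \geq \max\{7, 3k+5\}$, which certainly implies $g \geq 2k+6$ once $k \geq 1$ (and the $k=0$ case is immediate), so the genus hypothesis is automatic. The real content is the Clifford index bound.

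First I would recall the stratification of $\km_g$ by Clifford index (equivalently, by gonality): the locus of curves of Clifford index $\leq c$ has codimension $g - 2c - 2$ in $\km_g$ for $c < \lfloor (g-1)/2 \rfloor$, by the classical theory of Brill--Noether loci (the gonality stratum $\km^1_{g,d}$ of $d$-gonal curves has codimension $g-2d+2$). Since the forgetful map $\kr_g \to \km_g$ is finite of degree $2^{2g}-1$, pulling back this stratification to $\kr_g$ preserves codimensions. Now $\kz$ has codimension $k$ in $\kr_g$; if the generic curve $C$ underlying $\kz$ had Clifford index $c_0$, then $\kz$ would be contained in the preimage of the Clifford-index-$\leq c_0$ locus, forcing $k \geq g - 2c_0 - 2$, i.e. $c_0 \geq (g - 2 - k)/2$. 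Under the hypothesis $g \geq 3k+5$ this gives $c_0 \geq (3k + 3 - k)/2 = k + 1 + \tfrac{k}{2} \geq k+1$; a slightly more careful bookkeeping (or using that the Clifford index is an integer together with $g \geq 3k+5$ strictly) should push this to $c_0 \geq k+2$. Combined with $g \geq 7$ forcing $c_0 \geq 3$ for the generic curve, we get $c \geq \max\{3, k+2\}$ for the generic point of $\kz$.

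Having established both numerical hypotheses of Proposition~\ref{prop1} for the generic $(C,\eta) \in \kz$, the conclusion is immediate: $C_\eta$ is the unique irreducible non-degenerate curve in $\bigcap_{Q \in K} Q$. One should also note that for the generic point of $\kz$, the semicanonical bundle $\omega_C \otimes \eta$ is globally generated and very ample (so that $C_\eta$ makes sense as a non-degenerate curve in $\IP^{g-2}$), which follows from Lemma~\ref{lem:proj-normal} given $g \geq 7$ and $c \geq 3$; here we use that very ampleness and global generation are open conditions, so they hold generically on $\kz$ as soon as they hold at one point of the ambient Clifford-index stratum, which Lemma~\ref{lem:proj-normal} guarantees.

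The main obstacle is the bookkeeping in the second paragraph: one must be careful that the ``codimension of the Clifford-index stratum'' is exactly what I claimed, since there are several closely related Brill--Noether loci (gonality, Clifford dimension, Clifford index) whose codimensions differ by small constants, and the inequality $c \geq k+2$ rather than merely $c \geq k+1$ is exactly what is needed and is tight. I would verify this by writing $c_0 \geq \lceil (g-2-k)/2 \rceil$ and checking that $g \geq 3k+5$ gives $(g-2-k)/2 \geq (2k+3)/2 > k+1$, hence $c_0 \geq k+2$; the strictness of $g \geq 3k+5$ over $g \geq 3k+4$ is what makes this work, matching the fact that Proposition~\ref{prop1}'s hypothesis $g \geq 2k+6$ is weaker and the gap is absorbed into the Clifford index computation.
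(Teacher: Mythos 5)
Your proposal is correct and follows essentially the same route as the paper: both reduce to Proposition~\ref{prop1} by bounding the Clifford index of the generic curve of $\kz$ via the codimension $g-2c-2$ of the Clifford-index (gonality) stratum, deducing $c \geq k+2$ from $k \geq g-2c-2$, $g \geq 3k+5$ and integrality, with $c \geq 3$ coming from $g \geq 7$. (Your intermediate simplification ``$k+1+\tfrac{k}{2}$'' is an arithmetic slip --- the bound is $(2k+3)/2 = k+\tfrac{3}{2}$ --- but your final verification in the last paragraph gets this right.)
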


\begin{proof}
Let $\mathcal{M}_g^c$ be the locus in $\mathcal{M}_g$ corresponding to curves with Clifford index~$c$.
Then $\km_g^c$ is a finite union of subvarieties of~$\km_g$,
where the one of higher dimension corresponds to the curves whose Clifford index is realized by a~$g^1_{c+2}$ linear series, see~\cite{coppens-martens91}.
By Riemann-Hurwitz, the codimension in $\mathcal{M}_g$ of the component of the curves with a $g^1_{c+2}$ linear series is
\[
3g-3 - (2g-2c+2-3) = g-2c-2.
\]
If $k=0$, a generic curve in $\mathcal{M}_g$ has Clifford index $c\geq 3$, because $g\geq 7$.
As when~$k>0$, since~$g \geq 3k+5$, we obtain
\[
k \geq g -2c -2 \geq 3k +5 -2c-2 = 3k -2c+3,
\]
and thus $c \geq k+2$.
The corollary follows by applying Proposition~\ref{prop1}.
\end{proof}

Let $\widetilde{\ka_g}^m$ be the space of isogenies of principally polarized Abelian varieties of degree $m$ (up to isomorphism); that is the space of classes of isogenies $\chi: A \longra A'$ such that $\chi^*L_{A'} \cong L_{A}^{\otimes m}$,
where~$L_A$ (respectively~$L_{A'}$) is a principal polarization on $A$ (respectively~$A'$). There are two forgetful maps to the moduli space $\mathcal{A}_g$ of p.p.a.v. of dimension~$g$
\begin{equation}\label{eq:forgetful}
\xymatrix{
 & \hspace*{3mm}\widetilde{\ka_g}^m \ar[dl]_{\varphi} \ar[dr]^{\psi}& \\
	\ka_g  & &   \ka_g,
}
\end{equation}
such that $\varphi(\chi) = (A,L_{A})$ and $\psi(\chi) = (A',L_{A'})$. These maps yield the following commutative diagram,
\begin{equation}\label{tangent}
\xymatrix{
	& \hspace*{3mm}T_{[\chi]}\widetilde{\ka_{g-1}}^m \ar[dl]_{d\varphi} \ar[dr]^{d\psi} & 
    \vspace*{2mm}\\
	\ T_{[A]} \ka_{g-1} \ar[rr]^\lambda_{} & & T_{[A']} \ka_{g-1}.
}
\end{equation}
where all maps are isomorphisms.

\begin{thm}\label{thm1}
Let $\kz \subset \kr_g$ be a (possibly reducible) codimension $k$ subvariety. Assume that $g \geq \max\lbrace 7,3k+5 \rbrace$, and let $(C,\eta)$ be a generic element in~$\kz$.
If there is a pair $(C',\eta')\in\mathcal{R}_g$ such that there exists an isogeny $\chi: P(C,\eta) \longra P(C',\eta')$,
then $(C,\eta) \cong (C',\eta')$ and $\chi = [n]$, for some $n \in \Z$. 
\end{thm}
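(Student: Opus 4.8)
The plan is to translate the isogeny hypothesis into a statement about the infinitesimal variation of Hodge structure along $\kz$ and then feed it into Corollary~\ref{cor1}. Recall the standard identifications for a Prym $P=P(C,\eta)$ of dimension $g-1$: $\rH^{1,0}(P)\cong\rH^0(C,\omega_C\otimes\eta)$, $\rH^{0,1}(P)\cong\rH^1(C,\eta)\cong\rH^0(C,\omega_C\otimes\eta)^*$, hence $T_{[P]}\ka_{g-1}\cong\Sym^2\rH^0(C,\omega_C\otimes\eta)^*$; and the codifferential of the Prym map $\mathcal P:\kr_g\to\ka_{g-1}$ at $(C,\eta)$ is the multiplication map $\Sym^2\rH^0(C,\omega_C\otimes\eta)\to\rH^0(C,\omega_C^{\otimes2})$, whose kernel is $I_2(C_\eta)$. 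Since $g\ge7$ and a generic curve has Clifford index $c\ge3$, Lemma~\ref{lem:proj-normal} makes $C_\eta$ projectively normal, so this multiplication map is onto and $d\mathcal P$ is injective at $(C,\eta)$; thus $\mathcal P|_\kz$ is an immersion near $(C,\eta)$ and $\mathcal P(\kz)$ has dimension $3g-3-k>0$.

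First I would spread out the isogeny. If $\chi^* L_{P'}=L_P^{\otimes m}$ then $\deg\chi=m^{g-1}$, so for each fixed $m$ there are only finitely many such $\chi$ out of a given $P$, and since the Prym map is generically finite the target $(C',\eta')$ is then one of finitely many choices. Hence the locus in $\kr_g$ of pairs admitting an isogenous distinct Prym is a countable union of subvarieties; replacing $\kz$ by the irreducible component through our point (which has codimension $k$) and passing to a generically finite cover $\mathcal W$, I obtain families $(\ks,\eta)\to\mathcal W$ and $(\ks',\eta')\to\mathcal W$ with a relative polarized isogeny $\chi$ restricting to the given one, and I set $\kz'\subset\kr_g$ to be the closure of the image of the second family.

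Next comes the infinitesimal comparison. An isogeny induces an isomorphism of rational Hodge structures $\rH^1(P',\Q)\cong\rH^1(P,\Q)$, and the corresponding finite-index sublattice of $\rH^1(P,\Z)$ stays a sub-Hodge structure under any deformation; consequently the isogeny deforms with $(C,\eta)$, the isomorphism $\lambda$ of \eqref{tangent} equals $\Sym^2\mu$ for the induced $\mu:\rH^{0,1}(P)\to\rH^{0,1}(P')$, and the Kodaira--Spencer maps of the two families over $\mathcal W$ are intertwined by $\lambda$. Evaluating at a general point of $\mathcal W$ yields $\lambda(V)=V'$ with $V=\operatorname{Im}(d\mathcal P|_\kz)$ and $V'=\operatorname{Im}(d\mathcal P|_{\kz'})$; since $\dim V=3g-3-k$ this forces $\codim\kz'=k$. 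Dualizing, $\lambda^*=\Sym^2\mu^*$ for an isomorphism $\mu^*:\rH^0(C',\omega_{C'}\otimes\eta')\to\rH^0(C,\omega_C\otimes\eta)$, and a direct computation gives $V^\perp=\operatorname{mult}_C^{-1}(T)\supset I_2(C_\eta)$ with $\dim T=k$, likewise $V'^\perp\supset I_2(C'_{\eta'})$ of codimension $k$, and $\lambda^*(V'^\perp)=V^\perp$. Therefore $K:=I_2(C_\eta)\cap\lambda^*\big(I_2(C'_{\eta'})\big)$ has codimension some $k'\le k$ in $I_2(C_\eta)$, while $\lambda^{*-1}(K)\subset I_2(C'_{\eta'})$ has codimension $k'$ there. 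Genericity of $(C,\eta)$ and $g\ge3k+5$ give $c\ge k+2\ge k'+2$ and $c\ge3$ as in the proof of Corollary~\ref{cor1}, and $g\ge\max\{7,2k'+6\}$; the same bounds hold for $C'$ because $\kz'$ also has codimension $k$. So Proposition~\ref{prop1} applies to $K$ and to $\lambda^{*-1}(K)$: $C_\eta$ is the unique irreducible non-degenerate curve in $\bigcap_{Q\in K}Q$, and $C'_{\eta'}$ the unique one in $\bigcap_{Q'\in\lambda^{*-1}(K)}Q'$. The projective linear isomorphism $\nu$ dual to $\mu^*$ carries the first intersection onto the second, so $\nu(C_\eta)=C'_{\eta'}$; as the semicanonical morphisms $\varphi_{\omega_C\otimes\eta}$, $\varphi_{\omega_{C'}\otimes\eta'}$ are embeddings and $\nu$ is linear, this produces an isomorphism $\beta:C\to C'$ with $\beta^*(\omega_{C'}\otimes\eta')=\omega_C\otimes\eta$, hence $\beta^*\eta'=\eta$ and $(C,\eta)\cong(C',\eta')$.

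Finally I would prove $\chi=[n]$. Identifying $(C',\eta')$ with $(C,\eta)$ via $\beta$ and setting $\tilde\chi=P(\beta)^{-1}\circ\chi\in\End(P)$, the map $\nu$ becomes a projective automorphism of $\mathbb P^{g-2}$ fixing $C_\eta$, hence an automorphism of $C_\eta\cong C$; a generic curve of genus $\ge3$ has none, so $\nu=\mathrm{id}$ projectively and $\tilde\chi$ acts on $\rH^{0,1}(P)$ as a scalar $s$, thus on $\rH^{1,0}(P)$ as $\bar s$. If $s\notin\Q$, the characteristic polynomial $(x-s)^{g-1}(x-\bar s)^{g-1}$ of $\tilde\chi$ lies in $\Z[x]$, so $s$ generates an imaginary quadratic field $F\hookrightarrow\End^0(P)$ acting on $\rH^{1,0}(P)$ through a single embedding, which forces $P$ to be isogenous to a power of a CM elliptic curve; but then the same holds for every member of the family $\mathcal W$, putting the positive-dimensional $\mathcal P(\kz)$ inside a countable subset of $\ka_{g-1}$ --- a contradiction. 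Hence $s\in\Q$, so $\tilde\chi$ acts as $s$ on all of $\rH^1(P,\Z)$, forcing $s=n\in\Z$ and $\tilde\chi=[n]$, i.e. $\chi=[n]$ under the identification $(C,\eta)\cong(C',\eta')$. I expect the main obstacle to be making the infinitesimal comparison rigorous --- in particular the equality $\lambda(V)=V'$, which pins down $\codim\kz'=k$ and keeps the codimension bookkeeping within reach of Proposition~\ref{prop1}; the remaining steps are the Prym analogue of the Jacobian arguments of Bardelli--Pirola and Marcucci--Naranjo--Pirola.
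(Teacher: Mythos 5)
Your proposal is correct in outline and, at its core, follows the same route as the paper: spread the isogeny out in a family over (a cover of) $\kz$, identify $\lambda$ with $\Sym^2$ of the isomorphism induced by $d_0\chi$, show that $K=I_2(C_\eta)\cap\lambda^*\bigl(I_2(C'_{\eta'})\bigr)$ has codimension at most $k$ in $I_2(C_\eta)$ (your Grassmann count inside $V^\perp$ is exactly the paper's bound $\dim\bar T\le 3g-3+k$, obtained there from an irreducible $\kv\subset\widetilde{\ka_{g-1}}^m$ dominating $P(\kz)$ and $P(\kz')$), feed this into the extended Babbage--Enriques--Petri statement, and recover $\eta\cong\eta'$ by pulling back hyperplanes through the projective isomorphism induced by $d_0\chi$. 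The genuine divergences are two. First, you invoke Proposition~\ref{prop1} on both sides, which is why you need $\lambda(V)=V'$, $\codim\kz'=k$, and a Clifford-index bound for $C'$; the paper avoids all of this by applying Corollary~\ref{cor1} only on the $C$-side: since the quadrics of $K$ correspond under $\lambda$ to quadrics containing $C'_{\eta'}$, the preimage of $C'_{\eta'}$ is an irreducible non-degenerate curve in $\bigcap_{Q\in K}Q$ and is therefore equal to $C_\eta$ --- so the step you flag as the main obstacle (the exact equality $\lambda(V)=V'$) is not actually needed, only the containment giving the dimension bound. Second, for $\chi=[n]$ the paper simply quotes Pirola's theorem \cite{pirola88} that $\End(P(C,\eta))\cong\Z$ when $k<g-2$, and it uses the same theorem earlier to guarantee $\chi^*\Xi'\cong\Xi^{\otimes m}$, a condition you only assume hypothetically in your spreading-out step; your hand-made substitute (no automorphisms, integrality of the scalar, CM-degeneration and countability) is plausible, but the no-automorphism claim must be argued for the generic member of $\kz$ rather than of $\km_g$ --- it holds because $k\le(g-5)/3<g-2$, the codimension of the locus of curves with non-trivial automorphisms --- and the CM step should be justified along the lines you sketch (constancy of the characteristic polynomial in the family). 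With these points tidied, your argument and the paper's are essentially interchangeable, the paper's being shorter because of the one-sided use of Corollary~\ref{cor1} and the appeal to Pirola.
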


\begin{proof}
Suppose that $(C,\eta)$ is generic in $\kz$. By the assumption on $g$, the Clifford index of a generic element of $\kz$ is at least three (as shown in the proof of Corollary~\ref{cor1}). However, by \cite{naranjo96}, if the Clifford index of a curve $C$ is $c \geq 3$, then the corresponding fiber of the Prym map is 0-dimensional, i.e.~$\dim P^{-1}(P(C,\eta)) = 0$.
Therefore, the restriction of the Prym map to $\kz$,
\[
P_{\vert \kz}: \kz \longhookrightarrow \kr_g \longra \ka_{g-1},
\]
has generically fixed degree $d$ onto its image, for some $d\in\mathbb{N}$.
So, by the genericity of the pair~$(C,\eta)$, we can assume that $(C,\eta)$ lies in the locus of $\kz$ where~$P_{\vert\kz}$ is étale.
This gives the isomorphisms of the tangent spaces
\begin{equation}\label{eq:tangent}
T_{P[(C,\eta)]}P(\kz) \cong T_{[C,\eta]}\kz\quad \mbox{and}\quad T_{P[(C,\eta)]}P(\kr_g) \cong T_{[C,\eta]}\kr_g.
\end{equation}

Let us assume that the locus of curves in $\kr_g$ whose corresponding Prym variety is isogenous to the Prym variety of an element in $\kz$ has an irreducible component $\kz'$ of codimension $k$. By \cite{pirola88}, since $k < g-2$, we have $\End(P(C,\eta)) \cong \Z$. Suppose that we are given an isogeny $\chi: P(C,\eta) \longra P(C',\eta')$; then, it must have the property that the pull-back of the principal polarization $\Xi'$ is a multiple of the principal polarization $\Xi$ on $P(C,\eta)$, say $\chi^*\Xi' \cong \Xi^{\otimes m}$, for some $m\in\mathbb{Z}$. 

For such $m$, we have the diagram of forgetful maps as in~\eqref{eq:forgetful} with $g-1$ in place of~$g$.
We can find an irreducible subvariety $\kv \subset \widetilde{\ka_{g-1}}^m$ which dominates both $P(\kz)$ and $P(\kz')$ through $\varphi$ and $\psi$ respectively.
Setting $\kr := \varphi^{-1}(P(\kr_g))$ and $\kr' := \psi^{-1}(P(\kr_g))$, we have the inclusion~$\kv \subset \kr \cap \kr'$. 

For a generic element $\chi: P(C,\eta) \longra P(C', \eta')$ in $\kv$, the diagram (\ref{tangent}) becomes
\[
\xymatrix{
& \hspace*{3mm}T_{[\chi]}\widetilde{\ka_{g-1}}^m \ar[dl]_{d\varphi} \ar[dr]^{d\psi} & \vspace*{2mm}\\
\ T_{[P(C,\eta)]} \ka_{g-1} \ar[rr]^\lambda_{\cong} & & T_{[P(C',\eta')]} \ka_{g-1}.
}
\]
In addition, $T_{[P(C,\eta)]} \ka_{g-1} \cong \Sym^2 \rH^0(P(C,\eta),T_{P(C,\eta)}) \cong \Sym^2 \rH^0(\omega_C \otimes \eta)^*$. By looking at~$d\varphi$, and the isomorphisms in~\eqref{eq:tangent}, we see that we have the following diagram of tangents spaces and identifications:
\[
\xymatrix{
T_{[\chi]}\kv \ar[d]_{\cong} \ar@{^{(}->}[rr] & &T_{[\chi]}\kr \ar[d]_{\cong}\ar@{^{(}->}[rr] & &T_{[\chi]}\kr +T_{[\chi]}\kr'\ar@{=}[d]\ar@{^{(}->}[rr] & &T_{[\chi]}\widetilde{\ka_{g-1}}^m\ar[d]_{\cong}\\
T_{[C,\eta]}\kz \ar@{^{(}->}[rr] & &T_{C,\eta]}\kr_g \ar@{^{(}->}[rr] & &\bar{T}\ar@{^{(}->}[rr] & &\Sym^2 \rH^0(\omega_C \otimes \eta)^*
}
\]
where the vertical arrows are $d\varphi$. 

By the Grassmann formula, $\dim \bar{T} \leq 3g - 3 +k$. Set 
\[K(C_\eta) := \ker \Big(\Sym^2 \rH^0(\omega_C \otimes \eta) \longra \bar{T}^*\Big),\]
which is a subspace of the space of quadrics containing the semicanonical curve~$C_\eta$.
Notice that $\codim_{I_2(C_\eta)} K(C_\eta) \leq k$. By repeating the above argument with $\psi$ in place of $\varphi$, we get the corresponding inclusion of vector spaces $K(C'_{\eta'}) \subset I_2(C'_{\eta'})$, and by using the (canonical) isomorphism $\lambda$ above, we get a (canonical) isomorphism $K(C_\eta) \cong K(C'_{\eta'})$. 

A closer look at $\lambda: T_{[P(C,\eta)]} \ka_{g-1} \longra T_{[P(C',\eta')]}\ka_{g-1}$ reveals that this map is induced by the isogeny $\chi : P(C,\eta) \longra P(C',\eta')$.
In fact, one has that $d_0 \chi: \rH^0(\omega_{C} \otimes \eta) \longra \rH^0(\omega_{C'} \otimes \eta')$ is an isomorphism, and $\lambda$ is induced by it. This means that $d_0 \chi$ induces an isomorphism of projective spaces $\IP\rH^0(\omega_{C} \otimes \eta)^* \longra \IP\rH^0(\omega_{C'} \otimes \eta')^*$, which sends quadrics containing $C'_{\eta'}$ to quadrics containing $C_\eta$, by means of $\lambda$.
By using Lemma~\ref{cor1}, we get that $C_\eta \cong C'_{\eta'}$, and thus $C \cong C'$. This gives us the following commutative diagram
\[
\xymatrixcolsep{5pc}\xymatrix{
C \ar[d]^\cong \ar@{^{(}->}[r]^{\varphi_{\omega_C \otimes \eta}}&C_\eta\ar@{^{(}->}[r]\ar[d]^\cong & \IP\rH^0(\omega_{C} \otimes \eta)^* \ar[d]^\cong\\
C' \ar@{^{(}->}[r]^{\varphi_{\omega_{C'} \otimes \eta'}}&C'_{\eta'}\ar@{^{(}->}[r] & \IP\rH^0(\omega_{C'} \otimes \eta')^*
}
\]
from which we deduce that $(C,\eta) \cong (C',\eta')$.
Indeed, pulling back hyperplanes to $C$ and~$C'$, yields an isomorphism $\omega_{C'} \otimes \eta' \cong \omega_{C} \otimes \eta$, from which it follows that~$\eta \cong \eta'$.
The isogeny is necessarily of the form $[n]$, for some $n\in\Z$, because $\End(P(C,\eta))\cong \Z$.
\end{proof}

\bibliographystyle{plain}

\begin{thebibliography}{1}

\bibitem{bardelli-pirola89}
F.~Bardelli and G.~P. Pirola.
\newblock {Curves of genus {$g$} lying on a {$g$}-dimensional {J}acobian
  variety}.
\newblock {\em Invent. Math.}, 95(2):263--276, 1989.

\bibitem{coppens-martens91}
M.~Coppens and G.~Martens.
\newblock {Secant spaces and {C}lifford's theorem}.
\newblock {\em Compositio Math.}, 78(2):193--212, 1991.

\bibitem{marcucci-naranjo-pirola16}
V.~Marcucci{,} J.~C.~Naranjo and G.~P. Pirola.
\newblock {Isogenies of {J}acobians}.
\newblock To appear in \emph{Algebraic Geometry}.

\bibitem{lange-sernesi96}
H.~Lange and E.~Sernesi.
\newblock {Quadrics containing a {P}rym-canonical curve}.
\newblock {\em J. Algebraic Geom.}, 5(2):387--399, 1996.

\bibitem{lazarsfeld89}
R.~Lazarsfeld.
\newblock {A sampling of vector bundle techniques in the study of linear
  series}.
\newblock In {\em {Lectures on {R}iemann surfaces ({T}rieste, 1987)}}, pages
  500--559. World Sci. Publ., Teaneck, NJ, 1989.

\bibitem{naranjo96}
J.~C. {Naranjo}.
\newblock {The positive dimensional fibres of the Prym map.}
\newblock {\em {Pac. J. Math.}}, 172(1):223--226, 1996.

\bibitem{pirola88}
G.~P. Pirola.
\newblock {Base number theorem for abelian varieties. {A}n infinitesimal
  approach}.
\newblock {\em Math. Ann.}, 282(3):361--368, 1988.

\bibitem{ran86}
Z.~Ran.
\newblock {On a theorem of {M}artens}.
\newblock {\em Rend. Sem. Mat. Univ. Politec. Torino}, 44(2):287--291 (1987),
  1986.

\end{thebibliography}

\end{document}